\documentclass[english]{amsart}

\usepackage{esint}
\usepackage[svgnames]{xcolor} 
\usepackage[colorlinks,citecolor=red,pagebackref,hypertexnames=false,breaklinks]{hyperref}
\usepackage{pgf,tikz}
\usepackage{pdfsync}

\usepackage{dsfont}
\usepackage{url}
\usepackage[utf8]{inputenc}
\usepackage[T1]{fontenc}
\usepackage{lmodern}
\usepackage{babel}
\usepackage{mathtools}  
\usepackage{amssymb}
\usepackage{lipsum}
\usepackage{mathrsfs}
\usepackage{color}
\usepackage[skip=2.2pt plus 1pt, indent=12pt]{parskip}

\newtheorem{theorem}{Theorem}
\newtheorem{lemma}{Lemma}
\newtheorem{corollary}{Corollary}
\newtheorem{remark}{Remark}

\title[An $L^p$-$L^{p'}$ Carleman inequality for $p=\frac{2n}{n+2}$]{An $L^p$-$L^{p'}$ Carleman inequality for $p=\frac{2n}{n+2}$}

\author{Mourad Choulli}
\address{Universit\'{e} de Lorraine, 34 cours L\'{e}opold, 54052 Nancy cedex, France}
\email{mourad.choulli@univ-lorraine.fr}

\author{Hiroshi Takase}
\address{Department of Mathematics, Okayama University, 3-1-1 Tsushima-naka, Kita-ku, Okayama 700-8530, Japan}
\email{takase@math.okayama-u.ac.jp}

\thanks{This work was supported by JSPS KAKENHI Grant Numbers JP25K17280 and JP23KK0049.}

\date{}

\begin{document}
\begin{abstract}
In this note, by refining the proof of the Carleman inequality obtained by Dos Santos, Kenig, and Salo in 2013, we establish a new $L^p$-$L^{p'}$ Carleman inequality.
\end{abstract}

\subjclass[2020]{35A23, 35J15, 35R01, 35R30}

\keywords{Riemannian manifold  without boundary, Laplace-Beltrami operator, Carleman inequality, spectral clusters.}

\maketitle

Let $M'=(M',\mathfrak{g}')$ be a connected compact smooth Riemannian manifold  without boundary of dimension $n-1$, $n\ge 3$, and $M=\mathbb{R}\times M'$. We endow $M$ with the metric $\mathfrak{g}=e\oplus \mathfrak{g}'$, where $e$ denotes the Euclidean metric on $\mathbb{R}$. The Laplace-Beltrami operator on $M$, denoted $\Delta$, is given as follows
\[
\Delta =\partial_t^2+\Delta',
\]
where $\Delta'$ is the Laplace-Beltrami operator on $M'$.

Let $(\lambda_j^2)$ be the sequence of eigenvalues of $-\Delta'$:  
\[
0=\lambda_0<\lambda_1\le \lambda_2\le \ldots\le \lambda_j\le \ldots,\quad \lambda_j\rightarrow \infty\; \mbox{as}\; j\rightarrow \infty. 
\]
In what follows, we assume that the following gap condition holds: 
\begin{equation}\label{gap}
\kappa:=\inf\left\{\lambda_{j+1}-\lambda_j;\; j\ge 0,\; \lambda_{j+1}\ne \lambda_j\right\}>0.
\end{equation}

In the case where $M'=\mathbb{S}^{n-1}$, the unit sphere of $\mathbb{R}^n$ equipped with the round metric, we verify that $\kappa\ge \frac{n-1}{2n-3}$ if $n\ge 3$ and $\kappa=1$ if $n=2$ since $\lambda_j=\sqrt{j(j+n-2)}$ for all $j\ge 0$ and $n\ge 2$.

Let $p=\frac{2n}{n+2}$ and $p'=\frac{2n}{n-2}$. Note that $p$ and $p'$ are conjugate, that is, we have $\frac{1}{p}+\frac{1}{p'}=1$. Fix $0<\varsigma \le \frac{\kappa}{2}$, let $\tau(n)=\max\left(\frac{4(n-1)}{n-2},n+1\right)>5$ and define
\[
\Lambda=\{|\tau| \ge \tau(n);\; \mathrm{dist}(|\tau|,\{\lambda_j;\; j\ge 0\})\ge \varsigma\}.
\]

We aim to establish the following Carleman inequality. From now on, $\mathbf{c}=\mathbf{c}(n,M')>0$ will denote a generic constant.

\begin{theorem}\label{thmci}
For all $\tau \in \Lambda$ and $u\in C_0^\infty (M)$, we have
\[
\|u\|_{L^{p'}(M)}\le \mathbf{c}\varsigma^{-\frac{2}{p'}}\|(e^{\tau t}\Delta e^{-\tau t})u\|_{L^p(M)}.
\]
\end{theorem}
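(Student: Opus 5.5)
The plan is to invert the conjugated operator explicitly using the product structure of $M$. Since $e^{\tau t}\partial_t e^{-\tau t}=\partial_t-\tau$, we have
\[
P_\tau:=e^{\tau t}\Delta e^{-\tau t}=(\partial_t-\tau)^2+\Delta'.
\]
Taking the Fourier transform in $t$ (dual variable $\xi$) and expanding in an orthonormal eigenbasis $(\phi_j)$ of $-\Delta'$, $P_\tau$ acts on the $j$-th mode as multiplication by
\[
(i\xi-\tau)^2-\lambda_j^2=-(\tau-\lambda_j-i\xi)(\tau+\lambda_j-i\xi).
\]
By symmetry we may take $\tau>0$; the case $\tau<0$ follows from $t\mapsto -t$. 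Because $\tau\in\Lambda$, this symbol never vanishes, so $P_\tau$ has an explicit inverse $G_\tau$ on $C_0^\infty(M)$. It then suffices to show that $G_\tau:L^p(M)\to L^{p'}(M)$ has norm at most $\mathbf{c}\varsigma^{-2/p'}$, applied to $f=P_\tau u$.

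I would split $G_\tau=G_\tau^{\rm far}+G_\tau^{\rm near}$ using the spectral projectors $\chi_k$ of $\sqrt{-\Delta'}$ onto $[k,k+1)$. The near part keeps those $k$ with $|k-\tau|\le 2$; the far part keeps the rest.

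For the far part, the symbol satisfies
\[
|m_j(\xi)|\lesssim \frac{1}{(|\xi|+|\tau-\lambda_j|)(|\xi|+\tau+\lambda_j)},\qquad |\tau-\lambda_j|\ge 1 .
\]
I would then factor $G^{\rm far}_\tau=(1-\Delta)^{-1/2}\,T\,(1-\Delta)^{-1/2}$ as far as possible and use the sharp Sobolev embeddings $H^1(M)\hookrightarrow L^{p'}(M)$ and $L^p(M)\hookrightarrow H^{-1}(M)$, which hold on $M$ by bounded geometry. This gives an $L^2$-bounded middle operator $T$ only in the region where $|\xi|+|\tau-\lambda_j|$ is comparable to $|\xi|+\lambda_j+1$. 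In the intermediate region ($\lambda_j$ of order $\tau$ but at distance $\ge 1$), I would instead decompose dyadically in $|\tau-\lambda_j|\sim 2^\ell$. Each dyadic piece is handled like the near part below, at scale $2^\ell$ in place of $1$, and the decay in $2^{\ell}$ should make the series summable, as in Dos Santos Ferreira--Kenig--Salo.

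For the near part, which I expect to be the main obstacle, only finitely many clusters $\chi_k$ with $k\approx\tau$ are involved. On each mode, the one-dimensional inverse of $(\partial_t-\tau)^2-\lambda_j^2$ has kernel
\[
K_j(t)=\frac{1}{2\lambda_j}\Bigl(\pm e^{(\tau-\lambda_j)t}\mathbf 1_{\pm t<0}\Bigr)+(\text{the analogous term with } \tau+\lambda_j),
\]
which is bounded by $\tau^{-1}e^{-|\tau-\lambda_j||t|}\le \tau^{-1}e^{-\varsigma|t|}$ by the definition of $\Lambda$. The gap condition \eqref{gap} together with $\varsigma\le\kappa/2$ ensures that all $\lambda_j$ in one cluster lie on the same side of $\tau$ at distance $\ge\varsigma$. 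In $M'$ I would use Sogge's spectral cluster bound $\|\chi_k\|_{L^2(M')\to L^{q}(M')}\le \mathbf{c}k^{\sigma}$ with $q=p'=\frac{2n}{n-2}$. This is exactly the endpoint $q=\frac{2(d+1)}{d-1}$ for $d=n-1$, and there $\sigma=\frac{n-2}{2n}$. Its dual gives the bound $L^p\to L^2$.

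Writing the near operator as $\chi_k\circ(\text{convolution in } t)\circ\chi_k$, and using that the convolution commutes with $\chi_k$ and has an operator-valued kernel bounded in $L^2(M')$ by $\tau^{-1}e^{-\varsigma|t|}$, I would apply Young's inequality in $t$. The exponent $r$ satisfies $1+\frac1{p'}=\frac1p+\frac1r$, so $\frac1r=\frac2{p'}$, and $\|e^{-\varsigma|\cdot|}\|_{L^r}\sim\varsigma^{-2/p'}$; this is exactly where the factor $\varsigma^{-2/p'}$ arises. The $t$-integration must be done via Minkowski with the mixed norm $L^{p'}_tL^{p'}_{x'}$. The estimate closes once $\tau^{-1}\tau^{2\sigma}=\tau^{-2/n}\lesssim 1$. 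I expect the delicate points to be justifying the operator-valued Young inequality, using the exponential kernel rather than a crude symbol bound, and checking that $\tau\ge\tau(n)$ makes the remaining constants uniform. The threshold $\tau(n)$ should come from requiring $\tau-2>0$ and the comparability $\lambda_j\sim\tau$ in the near and dyadic regions.
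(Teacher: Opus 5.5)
Your outline is correct, but it is organized differently from the paper's proof. The ingredients are shared:
\begin{itemize}
\item the mode-wise bound $|m_j^\tau(\eta)|\le\lambda_j^{-1}e^{-|\tau-\lambda_j||\eta|}$ from \cite[Lemma 2.3]{DKS};
\item the cluster estimates \eqref{up4}--\eqref{up5};
\item Minkowski's inequality through the clusters;
\item $e^{-\varsigma|\eta|}$ on the clusters adjacent to $\tau$ as the source of $\varsigma^{-2/p'}$.
\end{itemize}
The paper, however, makes no elliptic/characteristic splitting and no dyadic decomposition. It bounds $\|u(t,\cdot)\|_{L^{p'}(M')}$ by $\int_{\mathbb{R}}K(t-s)\|f(s,\cdot)\|_{L^p(M')}ds$, with $K(\eta)=\sum_{k\ge0}(1+k)^{2/p'}\max_{k\le\lambda_j<k+1}|m_j^\tau(\eta)|$. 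It then proves $K(\eta)\le\mathbf{c}\varsigma^{-2/p'}|\eta|^{-2/p'}$ uniformly in $\tau$ by comparing sums with integrals, and finishes with the one-dimensional Hardy--Littlewood--Sobolev inequality.

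Using Young's inequality instead of HLS is what makes your Sobolev step necessary rather than cosmetic. For $|\eta|\lesssim\tau^{-1}$, the high-frequency part $\sum_{k\ge2\tau}k^{2/p'-1}e^{-(k-\tau)|\eta|}$ of $K$ is comparable to $|\eta|^{-2/p'}$. This lies in weak $L^{p'/2}(\mathbb{R})$ but not in $L^{p'/2}(\mathbb{R})$, so cluster-wise or dyadic Young estimates there give a logarithmically divergent series. In your scheme, the multiplier $(1+\xi^2+\lambda_j^2)/((i\xi-\tau)^2-\lambda_j^2)$ is bounded for $\lambda_j\le\tau/2$ or $\lambda_j\ge2\tau$, and together with $H^1(M)\hookrightarrow L^{p'}(M)$ it does the job HLS does in the paper.

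So your route treats the non-characteristic part as pure ellipticity and needs only integrable kernels near the characteristic set. The price is the Sobolev embedding on $M$ and some dyadic bookkeeping. The paper's route is a single elementary computation, at the price of delicate sum/integral comparisons. Those comparisons are where the threshold $\tau(n)$ comes from; your splitting should only need a fixed lower bound such as $\tau\ge4$.

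Two statements in your sketch need correcting.

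First, the gap condition \eqref{gap} does not force the $\lambda_j$ of one cluster to lie on the same side of $\tau$, and you do not need it. On the range of a cluster, the convolution kernel is diagonal in $(\phi_j)$, so its $L^2(M')$ operator norm is $\max_j|m_j^\tau(\eta)|$. This is at most $\max_j\lambda_j^{-1}e^{-\varsigma|\eta|}$ by $\mathrm{dist}(\tau,\{\lambda_j\})\ge\varsigma$ alone.

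Second, in the intermediate region there is no decay in $\ell$. The $\ell$-th piece contains about $2^\ell$ unit clusters, each contributing $\lesssim\tau^{2/p'-1}2^{-2\ell/p'}$. The piece therefore contributes $\lesssim(2^\ell/\tau)^{2/n}$, which increases with $\ell$; the sum is bounded only because this region ends at $2^\ell\sim\tau$. For the same reason, you must sum over the unit clusters inside each piece. Equivalently, use the $TT^*$ bound $\lesssim\mu^{1/2}\tau^{1/p'}$ for the $L^2(M')\to L^{p'}(M')$ norm of the projector onto a window of width $\mu$ near $\tau$. Bounding that projector by the triangle inequality as $\mu\tau^{1/p'}$ on both sides would give a bound growing like $\tau$.
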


By simply using Hölder's inequality, we obtain the following corollary as a consequence of Theorem \ref{thmci}.

\begin{corollary}\label{corci}
There exists $\mathbf{c}_0=\mathbf{c}_0(n,M')>0$ such that for all $V\in L^{\frac{n}{2}}(M)$ satisfying $\|V\|_{L^{\frac{n}{2}}(M)}\le \mathbf{c}_0\varsigma^{\frac{2}{p'}}$, and $u\in C_0^\infty (M)$ we have
\[
\|u\|_{L^{p'}(M)}\le \mathbf{c}\varsigma^{-\frac{2}{p'}}\|(e^{\tau t}(\Delta+V) e^{-\tau t})u\|_{L^p(M)}.
\]
 \end{corollary}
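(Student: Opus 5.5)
The corollary follows from Theorem \ref{thmci} by a perturbation argument: the term coming from $V$ is absorbed into the left-hand side.

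\textbf{Step 1: isolate the $V$ term.} Multiplication by $V$ commutes with multiplication by $e^{\pm\tau t}$. Hence
\[
e^{\tau t}(\Delta+V)e^{-\tau t}u=(e^{\tau t}\Delta e^{-\tau t})u+Vu ,
\]
and so
\[
(e^{\tau t}\Delta e^{-\tau t})u=(e^{\tau t}(\Delta+V)e^{-\tau t})u-Vu .
\]

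\textbf{Step 2: apply the theorem.} Fix $\tau\in\Lambda$ and $u\in C_0^\infty(M)$. Theorem \ref{thmci} and the triangle inequality give
\[
\|u\|_{L^{p'}(M)}\le \mathbf{c}\varsigma^{-\frac{2}{p'}}\|(e^{\tau t}(\Delta+V)e^{-\tau t})u\|_{L^p(M)}+\mathbf{c}\varsigma^{-\frac{2}{p'}}\|Vu\|_{L^p(M)} .
\]

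\textbf{Step 3: Hölder.} We have
\[
\frac1p=\frac{n+2}{2n}=\frac{2}{n}+\frac{n-2}{2n}=\frac{1}{n/2}+\frac{1}{p'} .
\]
Therefore
\[
\|Vu\|_{L^p(M)}\le \|V\|_{L^{\frac n2}(M)}\|u\|_{L^{p'}(M)}\le \mathbf{c}_0\varsigma^{\frac{2}{p'}}\|u\|_{L^{p'}(M)} .
\]
This also shows that $Vu\in L^p(M)$, so the right-hand side of the corollary is finite.

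\textbf{Step 4: absorb.} Insert Step 3 into Step 2. The powers of $\varsigma$ cancel and the error term is at most $\mathbf{c}\mathbf{c}_0\|u\|_{L^{p'}(M)}$. Choose $\mathbf{c}_0=\frac{1}{2\mathbf{c}}$, where $\mathbf{c}$ is the constant of Theorem \ref{thmci}; this depends only on $(n,M')$. The error term is then at most $\frac12\|u\|_{L^{p'}(M)}$.

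Since $u\in C_0^\infty(M)$, the norm $\|u\|_{L^{p'}(M)}$ is finite, so we may subtract this term from both sides. This yields
\[
\|u\|_{L^{p'}(M)}\le 2\mathbf{c}\varsigma^{-\frac{2}{p'}}\|(e^{\tau t}(\Delta+V)e^{-\tau t})u\|_{L^p(M)} ,
\]
which is the claim with the generic constant renamed.

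There is no real obstacle. The only points needing care are the exponent identity in Step 3 and the choice $\mathbf{c}_0=\frac{1}{2\mathbf{c}}$, which is what makes the $\varsigma$-powers cancel exactly.
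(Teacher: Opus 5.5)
Your proof is correct and follows the paper's route, which the paper only sketches as ``by simply using H\"older's inequality'': apply Theorem \ref{thmci}, bound $\|Vu\|_{L^p(M)}\le\|V\|_{L^{\frac n2}(M)}\|u\|_{L^{p'}(M)}$ using $\frac1p=\frac2n+\frac1{p'}$, and absorb this term into the left-hand side. The absorption works because of the smallness condition $\|V\|_{L^{\frac n2}(M)}\le\mathbf{c}_0\varsigma^{\frac{2}{p'}}$ with $\mathbf{c}_0=\frac{1}{2\mathbf{c}}$, and you correctly note that $\|u\|_{L^{p'}(M)}<\infty$ is needed to subtract it.
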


Let $I$ be a compact interval of $\mathbb{R}$. It is proved in \cite[Proposition 2.1]{DKS} that the following Carleman inequality holds without the gap condition \eqref{gap} : there exists a constant $\mathfrak{c}=\mathfrak{c}(n,M')>0$ such that
\begin{equation}\label{DKS}
\|u\|_{L^{p'}(I \times M')}\le \mathfrak{c}(1+|I|^{\frac{1}{p'}})\|(e^{\tau t}\Delta e^{-\tau t})u\|_{L^p(I \times M')},
\end{equation}
for all $u\in C_0^\infty (I\times M')$ and $|\tau|\in [4,\infty)\setminus \{\lambda_j;\; j\ge 1\}$.

It is interesting to note that, by continuity, \eqref{DKS} remains valid for all $|\tau| \in [4, \infty)$.

An $L^p$-$L^{p'}$ Carleman inequality has been obtained by Jerison and Kenig \cite[Theorem 2.1]{JK} in the case $M=\mathbb{R}^n$, with a different weight function (see Remark \ref{rem} below). 


Our proof of Theorem \ref{thmci} is obtained by refining the proof of \eqref{DKS}. However, our Carleman inequality is closer to the one established by Jerison and Kenig \cite{JK}. More precisely, it involves the distance between the parameter $|\tau|$ and the spectrum of $-\Delta'$ (in \cite{JK}, $M'$ corresponds to the unit sphere of $\mathbb{R}^n$ equipped with the round metric). In light of \eqref{DKS}, it is reasonable to assume that it is not possible to obtain a Carleman inequality analogous to that of Theorem \ref{thmci} with a constant independent of the distance between the parameter $|\tau|$ and the spectrum of $-\Delta'$.

Let $(\phi_j)$ be a sequence of eigenfunctions, corresponding to the eigenvalues $(\lambda_j ^2)$,  chosen such that $(\phi_j)$ forms an orthonormal basis of $L^2(M')$. Let 
\[
p_j:L^2(M')\rightarrow L^2(M'): f\mapsto (f|\phi_j)\phi_j,
\]
where $(\cdot|\cdot)$ stands for the usual inner product on $L^2(M')$. Define the spectral clusters as follows
\[
\pi_\lambda=\sum_{\lambda\le \lambda_j<\lambda+1}p_j,\quad \lambda \ge 0.
\]
For further use, note that $\pi_\lambda$ is self-adjoint and $\pi_\lambda^2=\pi_\lambda$, for all $\lambda\ge 0$.

\begin{lemma}\label{lem1}
For all $\lambda\ge 0$ and $f\in L^2(M')$ we have
\begin{align}
&\|\pi_\lambda f\|_{L^{p'}(M')}\le \mathbf{c}(1+\lambda)^{\frac{1}{p'}}\|f\|_{L^2(M')}  ,\label{up4}
\\
&\|\pi_\lambda f\|_{L^2(M')}\le \mathbf{c}(1+\lambda)^{\frac{1}{p'}}\|f\|_{L^p(M')}.\label{up5}
\end{align}
\end{lemma}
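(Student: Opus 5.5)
The plan is to recognize \eqref{up4} as Sogge's spectral cluster estimate on the closed manifold $M'$ at the critical exponent, and then obtain \eqref{up5} from it by duality.

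\textbf{Step 1: identify \eqref{up4} with Sogge's estimate.} Set $d=n-1\ge 2$, the dimension of $M'$. Sogge's spectral cluster theorem for the unit-width clusters $\pi_\lambda$ of $\sqrt{-\Delta'}$ on a compact manifold without boundary reads
\[
\|\pi_\lambda f\|_{L^q(M')}\le \mathbf{c}(1+\lambda)^{\sigma(q)}\|f\|_{L^2(M')},\qquad 2\le q\le \infty .
\]
The exponent is
\[
\sigma(q)=\frac{d-1}{2}\Bigl(\frac12-\frac1q\Bigr)\quad\text{for } 2\le q\le \frac{2(d+1)}{d-1},
\]
and $\sigma(q)=\frac{d-1}{2}-\frac{d}{q}$ for larger $q$. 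I will check that $p'=\frac{2n}{n-2}=\frac{2(d+1)}{d-1}$, which is exactly the critical (Stein--Tomas) exponent. Both formulas then give
\[
\sigma(p')=\frac{d-1}{2}\cdot\frac{1}{d+1}=\frac{n-2}{2n}=\frac{1}{p'}.
\]
So \eqref{up4} is precisely Sogge's estimate at $q=p'$.

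\textbf{Step 2: small $\lambda$.} For $\lambda$ in a bounded range, $\pi_\lambda$ has finite rank and is spanned by smooth eigenfunctions. The bound is then trivial, with a constant absorbed into $(1+\lambda)^{1/p'}$. Alternatively, Sogge's theorem is already uniform for all $\lambda\ge0$. In either case the constant depends only on $(M',\mathfrak{g}')$ and $n$.

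\textbf{Step 3: \eqref{up5} by duality.} Since $\pi_\lambda$ is self-adjoint and $\pi_\lambda^2=\pi_\lambda$, for $f\in L^2(M')\cap L^p(M')$ I will write
\[
\|\pi_\lambda f\|_{L^2(M')}^2=(\pi_\lambda f|\pi_\lambda f)=(f|\pi_\lambda^2 f)=(f|\pi_\lambda(\pi_\lambda f)).
\]
By Hölder this is at most $\|f\|_{L^p(M')}\|\pi_\lambda(\pi_\lambda f)\|_{L^{p'}(M')}$. Applying \eqref{up4} to $\pi_\lambda f$ bounds this by
\[
\mathbf{c}(1+\lambda)^{\frac1{p'}}\|f\|_{L^p(M')}\|\pi_\lambda f\|_{L^2(M')}.
\]
Dividing by $\|\pi_\lambda f\|_{L^2(M')}$ (the case where it vanishes is trivial) gives \eqref{up5}. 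Equivalently, this is the $TT^*$ structure: $\pi_\lambda:L^2\to L^{p'}$ bounded implies $\pi_\lambda^*=\pi_\lambda:L^p\to L^2$ bounded with the same norm. The statement is for $f\in L^2$; note that $p<2$ on a compact manifold gives $L^2\subset L^p$, so $\|f\|_{L^p}$ is finite and no extension argument is needed.

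\textbf{Main obstacle.} The only substantive input is Sogge's $L^2\to L^{q}$ cluster bound at the endpoint $q=\frac{2(d+1)}{d-1}$. This is a deep result (proved via a parametrix for $e^{it\sqrt{-\Delta'}}$ over times $|t|\lesssim1$ together with an oscillatory-integral / Stein--Tomas argument). I would cite it rather than reprove it, taking care that it applies in every dimension $d\ge2$, i.e.\ $n\ge3$, which matches the paper's standing assumption.
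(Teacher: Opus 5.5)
Your proposal is correct and follows the paper's proof. The paper also cites \eqref{up4} directly from Sogge's spectral cluster estimate on the $(n-1)$-dimensional manifold $M'$. It then derives \eqref{up5} exactly as you do: $\|\pi_\lambda f\|_{L^2}^2=(\pi_\lambda f|f)$, then H\"older, then \eqref{up4} applied to $\pi_\lambda f$. Your explicit check that $p'=\frac{2(d+1)}{d-1}$ is the critical exponent, with $\sigma(p')=\frac{1}{p'}$, is a useful verification that the paper leaves implicit.
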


\begin{proof}
It follows from \cite[Corollary 5.1.2]{Sogge}, in which $n$ is replaced by $n-1$, 
\[
\|\pi_\lambda f\|_{L^{p'}(M')}\le \mathbf{c}(1+\lambda)^{\frac{1}{p'}}\|f\|_{L^2(M')} .
\]
This is \eqref{up4}. On the other hand, since 
\[
\|\pi_\lambda f\|_{L^2(M')}^2=(\pi_\lambda f|\pi_\lambda f)=(\pi_\lambda^2f|f)=(\pi_\lambda f|f), 
\]
we get by applying H\"older's inequality
\[
\|\pi_\lambda f\|_{L^2(M')}^2\le \|\pi_\lambda f\|_{L^{p'}(M')}\|f\|_{L^p(M')}.
\]
Combining this inequality with \eqref{up4} applied to $\pi_\lambda f$, we obtain \eqref{up5}.
\end{proof}

\begin{proof}[Proof of Theorem \ref{thmci}]

We verify that it suffices to consider the case $\tau > 0$. Let $\tau \in \Lambda\cap ]0,\infty[$, $u\in C_0^\infty(M)$ and 
\[
f:=e^{\tau t} (\partial_t^2+\Delta') e^{-\tau t}u=(\partial_t^2-2\tau \partial_t +\tau^2 +\Delta')u.
\]
In the following,  the Fourier transform with respect to $t$ is denoted $\mathcal{F}$. Since $p_j(\Delta' u)=-\lambda_j^2 p_j u$, we obtain
\begin{align*}
\mathcal{F}(p_jf(\cdot,x'))(\xi)&= ((i\xi)^2-2i\tau \xi +\tau^2-\lambda_j^2)\mathcal{F}(p_ju(\cdot,x'))(\xi)
\\
&=((i\xi-\tau)^2-\lambda_j^2)\mathcal{F}(p_ju(\cdot,x'))(\xi)
\\
&=(i\xi-\tau-\lambda_j)(i\xi-\tau+\lambda_j)\mathcal{F}(p_ju(\cdot,x'))(\xi).
\end{align*}

In the remaining part of this proof, $t\in \mathbb{R}$ and $x'\in M'$. Since $\tau \ne \lambda_j$ for all $j\ge 0$, we obtain
\begin{align*}
p_ju (t,x')&=\frac{1}{2\pi}  \int_\mathbb{R}\frac{e^{it\xi}}{(i\xi-\tau-\lambda_j)(i\xi-\tau+\lambda_j)}\mathcal{F}(p_jf(\cdot,x'))(\xi)d\xi
\\
&= \frac{1}{2\pi}  \int_{\mathbb{R}}\int_\mathbb{R}\frac{e^{i(t-s)\xi}}{(i\xi-\tau-\lambda_j)(i\xi-\tau+\lambda_j)}p_jf(s,x'))d\xi ds.
\end{align*}
Hence,
\[
p_ju (t,x')=\int_{\mathbb{R}}m_j^\tau(t-s)p_jf(s,x')ds,
\]
where
\[
m_j^\tau(\eta):=\frac{1}{2\pi}  \int_\mathbb{R}\frac{e^{i\eta\xi}}{(i\xi-\tau-\lambda_j)(i\xi-\tau+\lambda_j)}d\xi.
\]
Using
\[
\|u(t,\cdot)\|_{L^{p'}(M')}=\left\|\sum_{k\ge 0}\pi_k^2u(t,\cdot)\right\|_{L^{p'}(M')}\le \sum_{k\ge 0}\|\pi_k^2u(t,\cdot)\|_{L^{p'}(M')},
\]
and \eqref{up4}, we obtain
\begin{equation}\label{eq1}
\|u(t,\cdot)\|_{L^{p'}(M')}\le \mathbf{c}\sum_{k\ge 0}(1+k)^{\frac{1}{p'}} \|\pi_k u(t,\cdot)\|_{L^2(M')}.
\end{equation}

On the other hand, we have
\begin{align*}
\|\pi_k u(t,\cdot)\|_{L^2(M')}^2&=\sum_{k\le \lambda_j<k+1}|(u(t,\cdot)|\phi_j)|^2
\\
&=\sum_{k\le \lambda_j<k+1}\left|\int_{\mathbb{R}} m_j^\tau(t-s)(f(s,\cdot)|\phi_j)ds \right|^2.
\end{align*}
Applying Minkowski's inequality, we obtain
\begin{align*}
\|\pi_k u(t,\cdot)\|_{L^2(M')}&\le \int_{\mathbb{R}}\left(\sum_{k\le \lambda_j<k+1}|m_j^\tau(t-s)(f(s,\cdot)|\phi_j)|^2\right)^{\frac{1}{2}}ds
\\
&\le \int_{\mathbb{R}}\max_{k\le \lambda_j<k+1}|m_j^\tau(t-s)|\left(\sum_{k\le \lambda_j<k+1}|(f(s,\cdot)|\phi_j)|^2\right)^{\frac{1}{2}}ds
\\
&\le \int_{\mathbb{R}}\max_{k\le \lambda_j<k+1}|m_j^\tau(t-s)|\|\pi_kf(\cdot,s)\|_{L^2(M')}ds,
\end{align*}
which, combined with \eqref{up5}, gives
\[
\|\pi_k u(t,\cdot)\|_{L^2(M')}\le \mathbf{c}\int_{\mathbb{R}}(1+k)^{\frac{1}{p'}}\max_{k\le \lambda_j<k+1}|m_j^\tau(t-s)|\|f(\cdot,s)\|_{L^p(M')}ds.
\]
This  in \eqref{eq1} yields
\[
\|u(t,\cdot)\|_{L^{p'}(M')}\le \mathbf{c}\sum_{k\ge 0}(1+k)^{\frac{2}{p'}} \int_{\mathbb{R}}\max_{k\le \lambda_j<k+1}|m_j^\tau(t-s)|\|f(\cdot,s)\|_{L^{p}(M')}ds,
\]
which we rewrite as
\begin{equation}\label{eq2}
\|u(t,\cdot)\|_{L^{p'}(M')}\le \mathbf{c}\sum_{k\ge 0}A_k(t),
\end{equation}
where
\[
A_k(t):=(1+k)^{\frac{2}{p'}} \int_{\mathbb{R}}\max_{k\le \lambda_j<k+1}|m_j^\tau(t-s)|\|f(\cdot,s)\|_{L^p(M')}ds,\quad k\ge 0.
\]

It follows from \cite[Lemma 2.3]{DKS} that 
\[
|m_j^\tau(t-s)|\le \frac{1}{\lambda_j}e^{-|\tau -\lambda_j||t-s|},\quad j\ge 1.
\]

For $1\le k\le \lfloor \tau\rfloor-2$, we have
\[
\max_{k\le \lambda_j<k+1}|m_j^\tau(t-s)|\le \frac{1}{k}\max_{k\le \lambda_j<k+1}e^{-(\tau -\lambda_j)|t-s|}\le \frac{e^{-(\tau-1-k)|t-s|}}{k}.
\]
Hence,
\begin{align*}
\sum_{k=2}^{\lfloor \tau\rfloor-2}A_k(t) &\le  \sum_{k=2}^{\lfloor \tau\rfloor-2} (1+k)^{\frac{2}{p'}}k^{-1}\int_{\mathbb{R}}e^{-(\tau-1-k)|t-s|}\|f(\cdot,s)\|_{L^p(M')}ds
\\
&\le 2^{\frac{2}{p'}}\sum_{k=2}^{\lfloor \tau\rfloor-2} k^{\frac{2}{p'}-1}\int_{\mathbb{R}}e^{-(\tau-1-k)|t-s|}\|f(\cdot,s)\|_{L^p(M')}ds.
\end{align*}

Define
\[
g(\rho)=\rho^{\frac{2}{p'}-1}e^{-(\tau-1-\rho)|t-s|},\quad \rho\in [1,\lfloor \tau\rfloor-2].
\]
Then
\begin{align*}
g'(\rho)&=\left[\left(\frac{2}{p'}-1\right)+|t-s|\rho \right]\rho^{\frac{2}{p'}-2}e^{-(\tau-1-\rho)|t-s|}
\\
&=\left[-\alpha+|t-s|\rho \right]\rho^{\frac{2}{p'}-2}e^{-(\tau-1-\rho)|t-s|},
\end{align*}
where
\[
\alpha:=1-\frac{2}{p'}\in (0,1).
\]

Assume first that $|t-s|\le \frac{\alpha}{\lfloor \tau\rfloor-2}:=\gamma$, for which $g$ is non increasing on $[1,\lfloor \tau\rfloor-2]$. Thus,
\begin{align*}
&\sum_{k=2}^{\lfloor \tau\rfloor-2} k^{\frac{2}{p'}-1}\int_{|t-s|\le \gamma}e^{-(\tau-1-k)|t-s|}\|f(\cdot,s)\|_{L^p(M')}ds
\\
&\hskip 3cm \le \mathbf{c} \int_{|t-s|\le \gamma}I_\tau(|t-s|)\|f(\cdot,s)\|_{L^p(M')}ds,
\end{align*}
where
\[
I_\tau(|t-s|)=\int_1^{\lfloor \tau\rfloor-2}\rho^{\frac{2}{p'}-1}e^{-(\tau-1-\rho)|t-s|}d\rho.
\]
We have
\begin{align*}
I_\tau(|t-s|) &=|t-s|^{-\frac{2}{p'}}\int_{|t-s|}^{(\lfloor \tau\rfloor-2)|t-s|}r^{\frac{2}{p'}-1}e^{-[(\tau-1)|t-s|-r]}dr
\\
&\le |t-s|^{-\frac{2}{p'}}\int_0^\alpha r^{\frac{2}{p'}-1}dr
\\
&\le \mathbf{c} |t-s|^{-\frac{2}{p'}}.
\end{align*}
Therefore, we obtain
\begin{align}
&\sum_{k=2}^{\lfloor \tau\rfloor-2} k^{\frac{2}{p'}-1}\int_{|t-s|\le \gamma}e^{-(\tau-1-k)|t-s|}\|f(\cdot,s)\|_{L^p(M')}ds\label{eq3}
\\
&\hskip 2cm \le \mathbf{c}\int_{|t-s|\le \gamma}|t-s|^{-\frac{2}{p'}}\|f(\cdot,s)\|_{L^p(M')}ds.\nonumber
\end{align}

Consider now the case $|t-s|>\gamma$. Using $\sup_{\eta >0}\eta^{\frac{2}{p'}}e^{-\eta}<\infty$,
we obtain
\[
g(\rho)\le \mathbf{c}|t-s|^{-\frac{2}{p'}}\rho^{-1+\frac{2}{p'}}(\lfloor \tau \rfloor-1-\rho)^{-\frac{2}{p'}},\quad \rho \in [1,\lfloor \tau \rfloor-2],
\]
which we rewrite as
\[
g(\rho)\le \mathbf{c}|t-s|^{-\frac{2}{p'}}\sigma (\rho),\quad \rho \in [1,\lfloor \tau \rfloor-2],
\]
where
\[
\sigma (\rho)=\rho^{-1+\frac{2}{p'}}(\lfloor \tau \rfloor-1-\rho)^{-\frac{2}{p'}},\quad \rho \in [1,\lfloor \tau \rfloor-2].
\]

In what follows, we use that the assumption $\tau \ge \tau(n)$ implies $2\le \lfloor \alpha(\lfloor \tau \rfloor-1)\rfloor\le \lfloor \tau \rfloor-3$. Since
\[
\sigma'(\rho)=\left[ -\alpha(\lfloor \tau \rfloor-1)+\rho\right]\rho^{-2+\frac{2}{p'}}(\lfloor \tau \rfloor-1-\rho)^{-1-\frac{2}{p'}},\quad \rho \in [1,\lfloor \tau \rfloor-2],
\]
$\sigma$ is non increasing on $[1,\alpha(\lfloor \tau \rfloor-1)]$.  In consequence, we obtain
\begin{align*}
\sum_{k=2}^{\lfloor \alpha(\lfloor \tau \rfloor-1)\rfloor} g(k)&\le \mathbf{c}|t-s|^{-\frac{2}{p'}}\int_1^{\alpha(\lfloor \tau \rfloor-1)}\rho^{-1+\frac{2}{p'}}(\lfloor \tau \rfloor-1-\rho)^{-\frac{2}{p'}}d\rho
\\
&= \mathbf{c}|t-s|^{-\frac{2}{p'}}\int_{(\lfloor \tau \rfloor-1)^{-1}}^{\alpha}\rho^{-1+\frac{2}{p'}}(1-\rho)^{-\frac{2}{p'}}d\rho
\\
&\le \mathbf{c}|t-s|^{-\frac{2}{p'}}\int_0^{\alpha}\rho^{-1+\frac{2}{p'}}(1-\rho)^{-\frac{2}{p'}}d\rho.
\end{align*}
That is we have
\begin{equation}\label{eq3.1}
\sum_{k=2}^{\lfloor \alpha(\lfloor \tau \rfloor-1)\rfloor} g(k)\le \mathbf{c}|t-s|^{-\frac{2}{p'}}.
\end{equation}

On the other hand, we have
\begin{align*}
&\sum_{k=\lfloor \alpha(\lfloor \tau \rfloor-1)\rfloor+1}^{\lfloor \tau \rfloor-2} \gamma^{-1+\frac{2}{p'}} |t-s|g(k)
\\
&\hskip 2cm=\sum_{k=\lfloor \alpha(\lfloor \tau \rfloor-1)\rfloor+1}^{\lfloor \tau \rfloor-2} \gamma^{-1+\frac{2}{p'}}|t-s|k^{-1+\frac{2}{p'}}e^{-(\tau-1-k)|t-s|}
\\
&\hskip 2cm\le\sum_{k=\lfloor \alpha(\lfloor \tau \rfloor-1)\rfloor+1}^{\lfloor \tau \rfloor-2} \gamma^{-1+\frac{2}{p'}}|t-s|[\alpha(\lfloor \tau \rfloor-2)]^{-1+\frac{2}{p'}}e^{-(\tau-1-k)|t-s|}
\\
&\hskip 2cm\le \mathbf{c}\sum_{k=\lfloor \alpha(\lfloor \tau \rfloor-1)\rfloor+1}^{\lfloor \tau \rfloor-2} |t-s|e^{-(\tau-1-k)|t-s|}
\\
&\hskip 2cm\le \mathbf{c} \frac{|t-s|}{e^{|t-s|}-1}.
\end{align*}
As $\sup_{\eta >0} \frac{\eta}{e^{\eta}-1}<\infty$, we get
\[
\sum_{k=\lfloor \alpha(\lfloor \tau \rfloor-1)\rfloor+1}^{\lfloor \tau \rfloor-2} \gamma^{-1+\frac{2}{p'}} |t-s|g(k)\le \mathbf{c},
\]
But
\[
\gamma^{-1+\frac{2}{p'}} |t-s|\ge |t-s|^{\frac{2}{p'}}.
\]
Therefore, we obtain
\begin{equation}\label{eq3.2}
\sum_{\lfloor \alpha(\lfloor \tau \rfloor-1)\rfloor+1}^{\lfloor \tau \rfloor-2} g(k)\le \mathbf{c}|t-s|^{-\frac{2}{p'}}.
\end{equation}
Putting together \eqref{eq3.1} and \eqref{eq3.2}, we get
\[
\sum_{k=2}^{\lfloor \tau \rfloor-2} g(k)\le \mathbf{c}|t-s|^{-\frac{2}{p'}},
\]
from which we obtain
\begin{align}
&\sum_{k=2}^{\lfloor \tau\rfloor-2} k^{\frac{2}{p'}-1}\int_{|t-s|> \gamma}e^{-(\tau-1-k)|t-s|}\|f(\cdot,s)\|_{L^p(M')}ds\label{eq4}
\\
&\hskip 3cm\le \mathbf{c}\int_{|t-s|> \gamma}|t-s|^{-\frac{2}{p'}}\|f(\cdot,s)\|_{L^p(M')}ds.\nonumber
\end{align}
A combination of \eqref{eq3} and \eqref{eq4} gives
\begin{align*}
&\sum_{k=2}^{\lfloor \tau\rfloor-2} k^{\frac{2}{p'}-1}\int_{\mathbb{R}}e^{-(\tau-1-k)|t-s|}\|f(\cdot,s)\|_{L^p(M')}ds\
\\
&\hskip 3cm \le \mathbf{c}\int_{\mathbb{R}}|t-s|^{-\frac{2}{p'}}\|f(\cdot,s)\|_{L^p(M')}ds.
\end{align*}
Therefore, we have
\begin{equation}\label{eq5}
\sum_{k=2}^{\lfloor \tau\rfloor-2} A_k(t)
 \le \mathbf{c}\int_{\mathbb{R}}|t-s|^{-\frac{2}{p'}}\|f(\cdot,s)\|_{L^p(M')}ds.
\end{equation}

Now, we discuss the case $k\ge \lfloor \tau\rfloor+2$, for which we have
\[
\max_{k\le \lambda_j<k+1}|m_j^\tau(t-s)|\le \frac{1}{k}\max_{k\le \lambda_j<k+1}e^{-(\lambda_j-\tau)|t-s|}\le\frac{e^{-(k-\tau)|t-s|}}{k}.
\]
Define
\[
h(\rho)=\rho^{\frac{2}{p'}-1}e^{-(\rho-\tau)|t-s|},\quad \rho\ge \lfloor \tau\rfloor+2.
\]
Since
\[
h'(\rho)=\left[-\alpha -|t-s|\rho\right] \rho^{\frac{2}{p'}-2} e^{-(\rho-\tau)|t-s|}\le 0,\quad \rho\ge \lfloor \tau\rfloor+1,
\]
we get
\begin{equation}\label{eq6}
\sum_{k\ge  \lfloor \tau\rfloor+2}A_k(t)
  \le \mathbf{c} \int_{\mathbb{R}}J_\tau(|t-s|)\|f(\cdot,s)\|_{L^p(M')}ds,
\end{equation}
where
\[
J_\tau(|t-s|)=\int_{ \lfloor \tau\rfloor+1}^\infty \rho^{\frac{2}{p'}-1}e^{-(\rho-\tau)|t-s|}d\rho.
\]
If $(\lfloor \tau\rfloor+1)|t-s|<1$, then 
\begin{align*}
J_\tau(|t-s|)&\le |t-s|^{-\frac{2}{p'}}\int_{ (\lfloor \tau\rfloor+1)|t-s|}^\infty r^{\frac{2}{p'}-1}e^{-(r-\tau |t-s|)}dr
\\
&\le |t-s|^{-\frac{2}{p'}}\left(\int_0^1r^{\frac{2}{p'}-1}d\rho+\int_1^\infty e^{-r+1}dr\right)
\\
&\le \mathbf{c} |t-s|^{-\frac{2}{p'}}.
\end{align*}
When $(\lfloor \tau\rfloor+1)|t-s|\ge 1$, we have
\begin{align*}
J_\tau(|t-s|)&\le |t-s|^{-\frac{2}{p'}}\int_{ (\lfloor \tau\rfloor+1)|t-s|}^\infty e^{-(r-\tau |t-s|)}dr
\\
&=  |t-s|^{-\frac{2}{p'}} \int_{(\lfloor \tau\rfloor+1-\tau)|t-s|}^\infty e^{-r}dr
\\
&\le  |t-s|^{-\frac{2}{p'}}\int_0^\infty e^{-r}dr
\\
&\le \mathbf{c} |t-s|^{-\frac{2}{p'}}.
\end{align*}
The preceding inequalities in \eqref{eq6} yield
\begin{equation}\label{eq7}
\sum_{k\ge  \lfloor \tau\rfloor+2}A_k(t) 
 \le \mathbf{c} \int_{\mathbb{R}}|t-s|^{-\frac{2}{p'}}\|f(\cdot,s)\|_{L^p(M')}ds.
\end{equation}

When $k=0$, we have from \cite[Lemma 2.3]{DKS} 
\[
\max_{0\le \lambda_j<1}|m_j^\tau (t-s)|\le e^{-\frac{\tau}{2}|t-s|}\le e^{-2|t-s|}.
\]
As $\sup_{\eta >0}\eta^{\frac{2}{p'}}e^{-2\eta}<\infty$, we obtain
\begin{equation}\label{eq11}
A_0(t)\le \int_{\mathbb{R}}|t-s|^{-\frac{2}{p'}}\|f(\cdot,s)\|_{L^p(M')}ds.
\end{equation}

While for $k=1$, since
\[
\max_{1\le \lambda_j<2}|m_j^\tau (t-s)|\le e^{-(\tau-2)|t-s|}\le e^{-(\tau-2)|t-s|}\le e^{-2|t-s|} ,
\]
proceeding similarly as for $A_0(t)$, we get
\begin{equation}\label{eq11.1}
A_1(t)\le \int_{\mathbb{R}}|t-s|^{-\frac{2}{p'}}\|f(\cdot,s)\|_{L^p(M')}ds.
\end{equation}

Let us now consider the remaining case $k\in \{\lfloor \tau\rfloor-1,\lfloor \tau\rfloor, \lfloor \tau\rfloor+1\}$, for which we have
\[
\max_{k\le \lambda_j<k+1}|m_j^\tau(t-s)|\le \frac{1}{k}e^{-\varsigma |t-s|}\le e^{-\varsigma|t-s|}.
\]
As $\sup_{\eta >0}\eta^{\frac{2}{p'}}e^{-\varsigma \eta}=\varsigma^{-\frac{2}{p'}}\sup_{\eta >0}\eta^{\frac{2}{p'}}e^{-\eta}$, we proceed once again as for  $A_0$ to obtain
\begin{equation}\label{eq12}
A_k(t)\le \mathbf{c}\varsigma^{-\frac{2}{p'}}\int_{\mathbb{R}}|t-s|^{-\frac{2}{p'}}\|f(\cdot,s)\|_{L^p(M')}ds.
\end{equation}

Putting together \eqref{eq2}, \eqref{eq5}, \eqref{eq7}, \eqref{eq11}, \eqref{eq11.1} and \eqref{eq12}, we end up getting
\begin{equation}\label{eq8}
\|u(t,\cdot)\|_{L^{p'}(M')}\le \mathbf{c}\varsigma^{-\frac{2}{p'}} \int_{\mathbb{R}}|t-s|^{-\frac{2}{p'}}\|f(\cdot,s)\|_{L^p(M')}ds.
\end{equation}

Finally, applying Hardy-Littlewood-Sobolev's inequality to the right hand side of \eqref{eq8}, we get
\[
\|u\|_{L^{p'}(M)}\le \mathbf{c}\varsigma^{-\frac{2}{p'}}\|f\|_{L^p(M)}.
\]
This is the expected inequality.
\end{proof}

\begin{remark}\label{rem}
{\rm
A similar  proof to that of Theorem \ref{thmci} can be used to establish a variant of a Carleman inequality due to Jerison and Kenig \cite{JK}. Precisely, we have an estimate of the form
\begin{equation}\label{JK}
\||x|^{-\lambda}u\|_{L^{p'}(\mathbb{R}^n)}\le \mathbf{c}\delta^{-\frac{2}{p'}}\||x|^{-\lambda+2}\Delta u\|_{L^p(\mathbb{R}^n)},
\end{equation}
for all $u\in C_0^\infty(\mathbb{R}^n)$ such that $\mathrm{supp}(u)\subset \mathbb{R}^n\setminus\{0\}$ and $\lambda\in \Lambda$, where $\Lambda$ is of the form
\[
\Lambda=\{\lambda=\frac{n}{2}+k+\epsilon;\; k\in \mathbb{N},\; k\ge 2,\; \delta<\epsilon <1-\delta\},
\]
with $0<\delta <\frac{1}{2}$ arbitrarily fixed. The constant $\mathbf{c}>0$ in \eqref{JK} depends only on $n$. We refer to \cite{Ch2} for details.
}
\end{remark}


\begin{thebibliography}{99}
 

\bibitem{Ch2}  Choulli, M. : An introduction to unique continuation for second order partial differential equations. Lecture notes (to appear).
 
 \bibitem{DKS} Dos Santos Ferreira, D. ; Kenig, C. E.; Salo, M. : Determining an unbounded potential from Cauchy data in admissible geometries. Commun. Part. Diff. Equat.  38 (1) (2013), 50-68.
 
 \bibitem{JK} Jerison, D. ; Kenig, C. E. : Unique continuation and absence of positive eigenvalues for Schrödinger operators. 
Ann. Math. (2) 121 (1985), no. 3, 463-494.

\bibitem{Sogge} Sogge, C. D. : Fourier integrals in classical analysis. Cambridge Tracts in Math., 210
Cambridge University Press, Cambridge, 2017, xiv+334 pp.

\end{thebibliography}
\end{document}